\def\HiLi{\leavevmode\rlap{\hbox to \hsize{\color{yellow!50}\leaders\hrule height .8\baselineskip depth .5ex\hfill}}}
\DeclareMathOperator*{\argmax}{\arg\!\max}
\newtheorem{lemma}{Lemma}
\begin{document}
%
\title{eOMP: Finding Sparser Representation by Recursively Orthonormalizing the Remaining Atoms}
%
%
%
\author{Yuanyi~Xue, and~Yao~Wang
\thanks{Y. Xue, and Y. Wang are with the Department
of Electrical and Computer Engineering, Polytechnic School of Engineering, New York University. E-mail: \{yxue,yw523\}@nyu.edu.}}
\vspace{-.2in}
\maketitle
\begin{abstract}
Greedy algorithms for minimizing L0-norm of sparse decomposition have profound application impact on many signal processing problems. In the sparse coding setup, given the observations $\mathrm{y}$ and the redundant dictionary $\mathbf{\Phi}$, one would seek the most sparse coefficient (signal) $\mathrm{x}$ with a constraint on approximation fidelity. In this work, we propose a greedy algorithm based on the classic orthogonal matching pursuit (OMP) with improved sparsity on $\mathrm{x}$ and better recovery rate, which we name as eOMP. The key ingredient of the eOMP is recursively performing one-step orthonormalization on the remaining atoms, and evaluating correlations between residual and orthonormalized atoms. We show a proof that the proposed eOMP guarantees to maximize the residual reduction at each iteration. Through extensive simulations, we show the proposed algorithm has better exact recovery rate on i.i.d. Gaussian ensembles with Gaussian signals, and more importantly yields smaller L0-norm under the same approximation fidelity compared to the original OMP, for both synthetic and practical scenarios. The complexity analysis and real running time result also show a manageable complexity increase over the original OMP. We claim that the proposed algorithm has better practical perspective for finding more sparse representations than existing greedy algorithms.
\end{abstract}


%

\section{Introduction}\label{sec:intro}
%
%
%
%
\IEEEPARstart{T}{he} recent developments in sparse recovery have yielded profound impact on many old and new signal processing problems. In the sparse coding setting, people are looking for solving the classic inverse problem where the sampling matrix (or dictionary) $\mathbf{\Phi}$ constructs an underdetermined system. Specifically, given $\mathbf{\Phi} \in \mathbb{R}^{N\times M}$ where $M \gg N$ and observations $\mathrm{y} \in \mathbb{R}^{N}$, it solves an $\mathrm{x} \in \mathbb{R}^{M}$ via L0-norm minimization problem:
\begin{align}\label{eq:l0}
\min \quad &|\mathrm{x}|_0 \nonumber \\
\text{s.t. } \quad & \| \mathrm{y} - \mathbf{\Phi}\mathrm{x}\|_2 < \epsilon 
\end{align}

The nature of non-convexity of the L0-norm leads to two genre of algorithms for solving the above problem. The first category of algorithms seeks to relax the non-convex L0-norm term into a more optimization friendly L1-norm, such as the well-known LASSO algorithm~\cite{tibshirani1996regression}. Although it has mathematical guarantee on global convergence, the L1-norm in fact does not precisely measure the sparsity, which is the number of non-zero coefficients in the solution vector. On the other hand, another group of scholars design a series of primarily greedy algorithms to minimize the L0-norm directly. An exemplar of such algorithms is the orthogonal matching pursuit~\cite{tropp2007signal}, which iteratively finds the best correlated atom to the residual. More recently, several improved algorithms have been proposed. In~\cite{needell2009cosamp} and~\cite{dai2009subspace}, the authors proposed two similar algorithms (CoSaMP and subspace pursuit), respectively, that have been proven to have better exact recovery conditions and faster convergence, for signals with known sparsity.

Although the new algorithms are promising, we would like to note that they are mostly based on the following two assumptions: 1) all above cited work have proved exact recovery conditions that in general require the sampling matrix $\mathbf{\Phi}$ to be nearly orthonormal (as measured by the restricted isometry property~\cite{candes2005decoding}); and 2) in the synthetic simulations demonstrated, the signals are often constructed with known sparsity. This is particularly true for~\cite{needell2009cosamp} and~\cite{dai2009subspace}, wherein the estimated true sparsity is needed as the algorithm input. We would like to note these assumptions hamper the usability of these algorithms in the practical sparse coding scenario, where the sampling matrix is usually highly correlated and the actual signal sparsity is unknown. Indeed, \cite{needell2009cosamp} and~\cite{dai2009subspace} are targeting a different version of Eq.~\ref{eq:l0}, which tries to minimize the fidelity with an equality constraint on the L0-norm of $\mathrm{x}$.

From this standpoint, we propose a greedy algorithm based on the classic OMP algorithm which better suits the scenario with unknown sparsity and correlated sampling matrix. The proposed algorithm recursively performs one-step orthonormalization on the remaining atoms with respect to previously chosen atoms in the dictionary at each iteration, so that the orthnormalized remaining atoms span over the null space of the current signal estimate. We show through extensive synthetic and real simulations that the proposed algorithm is able to achieve better sparsity compared to the classic OMP, with a manageable complexity increase. We name the proposed algorithm as eOMP because it uses embedded recursive orthonormalization.

The rest of the paper is organized as follows. In Sec.~\ref{sec:revOMP}, we lay out the details of the proposed eOMP algorithm and we show the proof that the proposed eOMP guarantees to maximize residual reduction at each iteration; in Sec.~\ref{sec:exp}, we provide extensive simulation results for synthetic simulations under two sampling matrix configurations, the i.i.d Gaussian random ensemble and the overcomplete DCT dictionary, as well as a practical simulation based on coding video blocks by finding a sparse representation over a highly redundant and correlated self-adaptive dictionary~\cite{xue2014video}; we show the complexity analysis and real running time statistics of the proposed algorithm in Sec.~\ref{sec:comp}; we conclude the paper in Sec.~\ref{sec:conc}.
\section{Improving the OMP Algorithm}\label{sec:revOMP}
In the classic OMP, at each iteration, the algorithm consists of the following three major steps: find the most correlated column (atom) among all remaining columns of the sampling matrix with the current residual and append the chosen atom index into the set of chosen atoms (known as the support), a least squares update of the coefficients corresponding to the new support, and at last the residual is updated using least squares coefficients. As all greedy algorithms, the OMP converges when the residual norm stops decreasing or reaches the preset $\epsilon$ shown in Eq.~\ref{eq:l0}.

Before we jump to our proposed algorithm, we introduce an alternative way of implementing the original OMP algorithm by orthonormalizing the chosen atoms incrementally. Specifically, at each iteration, after selecting the chosen atom, we orthonormalize it with respect to the previous chosen atoms using the Gram-Schmidt algorithm. The coefficient with respect to this orthonormalized atom can be then obtained simply by the inner product (i.e. projection)  of the current residual to this orthonormalized atom. This alternative way of implementing OMP is shown in Algorithm~\ref{alg:embedOMP}. We would like to note that although the solution vector corresponds to the orthonormalized atoms, finding the coefficients to the original chosen atoms can be done after all atoms are chosen by a single least squares step.
\begin{algorithm}
 \caption{Alternative OMP algorithm with incremental orthonormalization of chosen atoms}
 \KwData{$\mathbf{\Phi} \in \mathbb{R}^{N\times M}, \mathrm{y} \in \mathbb{R}^{N\times 1}, \epsilon,$ assuming $\mathbf{\Phi}$ has zero mean and unit variance, and $\mathrm{y}$ has zero mean.}
 \KwResult{The set of chosen atom index $\mathcal{K}$, chosen orthonormalized atoms $\mathbf{D}$, solution vector $\mathrm{z}$ w.r.t $\mathbf{D}$, solution vector $\mathrm{x}$ w.r.t $\mathbf{\Phi}$.}
 initialize residual $\mathrm{r} \gets \mathrm{y}$, $\mathcal{K}=\emptyset$, $\mathcal{K}^c=\{1,2,\ldots,M\}$, $\mathbf{\Psi}=\mathbf{\Phi}$, $\mathbf{D}=\mathbf{0}$, iteration counter $t = 1$ \;
 \While{$\| \mathrm{r}\|_2 >= \epsilon$}{
  1. Update remaining atoms $\mathbf{\Psi} \gets \mathbf{\Phi}(:,i)\text{,\ } \forall i \in \mathcal{K}^c$\;
  2. $\mathrm{\phi}_t \gets \argmax_{\mathrm{\phi}_n \in \mathbf{\Psi}} \vert\langle \mathrm{\phi}_n,\mathrm{r} \rangle\vert$\ and $k \gets \text{index of } \mathrm{\phi}_t$\;
  3. Update chosen index set $\mathcal{K} \gets \mathcal{K}\cup \{k\}$ and remaining index set $\mathcal{K}^c \gets \mathcal{K}^c \backslash \{k\}$\;
  4. Orthonormalize $\mathrm{\phi}_t$ w.r.t $\mathbf{D}$: $\mathrm{\psi}_t \gets \mathrm{\phi}_t \perp \mathbf{D}$\;
  5. Augment $\mathbf{D} \gets [\mathbf{D}|\mathrm{\psi}_t]$\;
  6. $\mathrm{z}_t \gets \langle\mathrm{r},\mathrm{\psi}_t \rangle$\;
  7. Update $\mathrm{r} \gets \mathrm{r}-\mathrm{z}_t\mathrm{\psi}_t$\;
  8. Augment $\mathrm{z} \gets [\mathrm{z}|\mathrm{z}_t]$\;
  9. $t \gets t + 1$\;
 }
 (\textit{Optional}) Denote the chosen original atoms indexed by $\mathcal{K}$ as $\mathbf{\Phi}_{\mathcal{K}}$, solve least square problem $\mathrm{x} = \mathbf{\Phi}_{\mathcal{K}}^{\dagger} \mathrm{y}$.
 \vspace{.1in}
\label{alg:embedOMP}
\end{algorithm} 

We note that the solution vector $\mathrm{z}$ can be written as 
\begin{equation}\label{eq:ztemomp}
\mathrm{z}=\mathbf{D}^T \mathrm{y}.
\end{equation}
This is because $\mathrm{z}_t = \langle \mathrm{r}_t, \mathrm{\psi}_t\rangle$ in Step 6 can also be written as $\mathrm{z}_t= \langle y,\mathrm{\psi}_t \rangle$ because $\mathrm{y}$ can be represented as linear combination of previously found orthonormalized atoms (which are all orthogonal to $\mathrm{\psi}_t$) and $\mathrm{r}_t$. 

To show Algorithm~\ref{alg:embedOMP} yields identical solution to the original OMP, let us examine the QR decomposition method commonly used for the least squares update. For a residual $\mathrm{r} = \mathrm{y} - \mathbf{\Phi}_{\mathcal{K}}\mathrm{x}$, denote the QR decomposition on the chosen atom matrix $\mathbf{\Phi}_{\mathcal{K}}$ by $\mathbf{Q}\mathbf{R}$. The $\mathbf{Q}$ matrix is exactly the $\mathbf{D}$ matrix in Algorithm~\ref{alg:embedOMP} by the construction of the QR decomposition. To show that the original OMP solution is equivalent to Algorithm~\ref{alg:embedOMP}, we just need to show that $\mathbf{R}\mathrm{x}$ is exactly equal to $\mathrm{Z}$ from Algorithm 1, when $\mathbf{Q}=\mathbf{D}$. Left multiply $\mathbf{Q}^T$ on the residual equation, we get the following
\begin{align}\label{eq:qr}
\mathbf{Q}^T\mathrm{r} & = \mathbf{Q}^T\mathrm{y} - (\mathbf{Q}^T\mathbf{Q})\mathbf{R}\mathrm{x} \nonumber \\
& = \mathbf{Q}^T\mathrm{y} - \mathbf{R}\mathrm{x}
\end{align}

To minimize $\|r\|_2$ (least square), it is equivalent to solve the system of linear equations by setting Eq.~\ref{eq:qr} to zero.
\begin{equation}
\min \|r\|_2 \Leftrightarrow \mathbf{R}\mathrm{x} = \mathbf{Q}^T\mathrm{y}
\end{equation}
The right side of the above equation is identical to the solution in Eq.~\ref{eq:ztemomp}. Therefore, Algorithm~\ref{alg:embedOMP} is an alternative way to give the same solution as the original OMP.

Our proposed eOMP algorithm is motivated by Algorithm~\ref{alg:embedOMP}.  Notice that the update in Step 7 of Algorithm~\ref{alg:embedOMP} will lead to a residual reduction norm of $|\mathrm{z}_t|$ with each new atom. Therefore, to maximize the residual reduction by each new atom, we should maximize $|\mathrm{z}_t|$, the magnitude of the correlation of the current residual with the othonormalized chosen atom. However, in Step 2, the atom is chosen based on the correlation with the original atoms. Only after an atom is chosen based on this correlation, then the atom is orthonormalized (Step 4), and the coefficient is obtained using the projection to the orthonormalized atom (Step 6). 

To maximize the error reduction at each new iteration, eOMP algorithm first orthonormalizes all the remaining atoms with respect to all previously chosen atoms, and then compute the correlation of the current residual with these othonormalized atoms and find the one with the largest correlation magnitude. Instead of orthonormalizing each remaining original atom with respect to all previously chosen atoms, we further propose to perform the orthonormalization recursively, i.e., each time a new atom is chosen, all remaining atoms are orthonormalized with respect to this new atom only, and the original remaining atoms will be replaced with the newly othonormalized ones.

More specifically, starting at the second iteration, the algorithm recursively performs a one-step orthonormalization of all remaining atoms with respect to the chosen atoms. Denote the last orthonormalized atom in the chosen set $\mathbf{D}$ by $\mathrm{d}$, the one-step orthonormalization operates on each remaining atom $\mathrm{\psi}_i$ by:
\begin{align}\label{eq:onestepGS}
& \mathrm{\psi}_i \gets \mathrm{\psi}_i - \langle \mathrm{d},\mathrm{\psi}_i \rangle \mathrm{d} \nonumber \\
& \mathrm{\psi}_i \gets \frac{\mathrm{\psi}_i}{\lVert\mathrm{\psi}_i\rVert_2}
\end{align}
The orthonormalized atoms, instead of their original atoms in the sampling matrix, are then used to evaluate the correlation with the current residual and the one with the highest correlation (in magnitude) is chosen as the next atom. The computation of the new coefficient and new residual can be done using the newly chosen atom, which is already orthonormalized.

The proposed algorithm is presented in Algorithm~\ref{alg:OMP}. We highlight the differences of our proposed eOMP from Algorithm~\ref{alg:embedOMP}.
\begin{algorithm}
 \caption{Proposed eOMP algorithm with recursive orthonormalization of remaining atoms}
 \KwData{$\mathbf{\Phi} \in \mathbb{R}^{N\times M}, \mathrm{y} \in \mathbb{R}^{N\times 1}, \epsilon,$ assuming $\mathbf{\Phi}$ has zero mean and unit variance, and $\mathrm{y}$ has zero mean.}
 \KwResult{The set of chosen atom index $\mathcal{K}$, chosen orthonormalized atoms $\mathbf{D}$, solution vector $\mathrm{z}$ w.r.t $\mathbf{D}$, solution vector $\mathrm{x}$ w.r.t $\mathbf{\Phi}$.}
 initialize residual $\mathrm{r} \gets \mathrm{y}$, $\mathcal{K}=\emptyset$, $\mathcal{K}^c=\{1,2,\ldots,M\}$, $\mathbf{\Psi}=\mathbf{\Phi}$, $\mathbf{D}=\mathbf{0}$, iteration counter $t = 1$ \;
 \While{$\| \mathrm{r}\|_2 >= \epsilon$}{
  1. Update remaining atoms $\mathbf{\Psi} \gets \mathbf{\Psi}(:,i)\text{,\ } \forall i \in \mathcal{K}^c$\;
  \HiLi 2. Perform one-step orthonormalization Eq.~\ref{eq:onestepGS} of all columns \HiLi in $\mathbf{\Psi}$ w.r.t $\mathbf{\psi}_{t-1}$\;
  \HiLi 3. $\mathrm{\psi}_t \gets \argmax_{\mathrm{\psi}_n \in \mathbf{\Psi}} \vert\langle \mathrm{\psi}_n,\mathrm{r} \rangle\vert$\ and $k \gets \text{index of } \mathrm{\psi}_t$\;
  4. Update chosen index set $\mathcal{K} \gets \mathcal{K}\cup \{k\}$ and remaining index set $\mathcal{K}^c \gets \mathcal{K}^c \backslash \{k\}$\;
  5. Augment $\mathbf{D} \gets [\mathbf{D}|\mathrm{\psi}_t]$\;
  6. $\mathrm{z}_t \gets \langle\mathrm{r},\mathrm{\psi}_t \rangle$\;
  7. Update $\mathrm{r} \gets \mathrm{r}-\mathrm{z}_t\mathrm{\psi}_t$\;
  8. Augment $\mathrm{z} \gets [\mathrm{z}|\mathrm{z}_t]$\;
  9. $t \gets t + 1$\;
 }
 (\textit{Optional}) Denote the chosen original atoms indexed by $\mathcal{K}$ as $\mathbf{\Phi}_{\mathcal{K}}$, solve least square problem $\mathrm{x} = \mathbf{\Phi}_{\mathcal{K}}^{\dagger} \mathrm{y}$.
 \vspace{.1in}
\label{alg:OMP}
\end{algorithm}

There are a few implementation notes to accelerate the proposed algorithm. First, the final least squares step is optional, needed only when a solution vector with respect to the original sampling matrix $\mathbf{\Phi}$ is desired. Computationally this is equivalent to the last iteration solution update in the original OMP. Note that in applications of sparse representation for signal compression, it is better to use the coefficients associated with orthonormalized atoms to represent a signal, as in~\cite{xue2014video}. Second, in Step 3 when selecting the best atom for the current iteration, one could also mark the atoms that have near-zero correlations and exclude them from the consideration in future iterations. Last, one could impose additional convergence conditions in the while-loop. One commonly used condition is to compare the norm of $\mathrm{r}$ in the most recent iterations and terminate when the norm stops to decrease significantly.

Now we turn to giving a proof on why eOMP works better. For that, we have the following lemma.
\begin{lemma}
In each iteration, eOMP maximizes the residual reduction $\|\Delta\|_2$, where $\Delta\coloneqq\mathrm{r}_{t-1} - \mathrm{r}_t$.
\end{lemma}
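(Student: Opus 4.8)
The plan is to reduce the lemma to a single identity: that appending any remaining original atom $\phi_i$ (for $i\in\mathcal{K}^c$) to the current support decreases the residual by exactly $\lvert\langle \mathrm{r}_{t-1},\psi_i\rangle\rvert$, where $\psi_i$ is the unit vector in the direction of the component of $\phi_i$ orthogonal to the span of the already-chosen atoms. Once this identity is secured, the lemma is immediate: the paper already notes that Step 7 yields $\|\Delta\|_2=\lvert \mathrm{z}_t\rvert=\lvert\langle \mathrm{r}_{t-1},\psi_t\rangle\rvert$, and Step 3 selects the remaining orthonormalized atom with the largest $\lvert\langle\psi_n,\mathrm{r}_{t-1}\rangle\rvert$, so the greedy choice is precisely the one maximizing $\|\Delta\|_2$ over all candidate atoms. (This also pinpoints why eOMP differs from the original OMP, whose Step~2 instead maximizes $\lvert\langle\phi_n,\mathrm{r}\rangle\rvert=\|P_{S^{\perp}}(\phi_n)\|\,\lvert\langle\psi_n,\mathrm{r}\rangle\rvert$, a \emph{weighted} version of the true reduction.)

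First I would pin down the geometry of the recursively orthonormalized atoms. Writing $S:=\mathrm{span}\{\phi_k:k\in\mathcal{K}\}$ for the span of the $t-1$ previously chosen atoms, I would prove by induction on the iteration index that, at the start of iteration $t$, each remaining $\psi_i$ is a unit vector pointing in the direction of $P_{S^{\perp}}(\phi_i)$. The base case is trivial. For the inductive step I would use that the newly chosen atom $\psi_{t-1}$ is, by hypothesis, orthonormal to all atoms chosen before it; subtracting its component in Eq.~\ref{eq:onestepGS} thus preserves orthogonality to $\psi_1,\ldots,\psi_{t-2}$ while adding orthogonality to $\psi_{t-1}$. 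Composing these single-atom orthogonalizations against an orthonormal set reproduces the full projection $P_{S^{\perp}}$ (the standard Gram--Schmidt identity, in which each successive inner product is unaffected by the earlier subtractions), and the per-step normalizations merely rescale, leaving the direction equal to that of $P_{S^{\perp}}(\phi_i)$.

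Next I would compute the reduction for an arbitrary candidate. Since each execution of Step 7 removes the component of the residual along the freshly chosen unit atom, after $t-1$ iterations $\mathrm{r}_{t-1}$ is orthogonal to every chosen $\psi_j$; as these span $S$, we have $\mathrm{r}_{t-1}=P_{S^{\perp}}(\mathrm{y})$. Enlarging the support to $S'=S\oplus\mathrm{span}\{\psi_i\}$ gives the would-be residual $\mathrm{r}_t=P_{(S')^{\perp}}(\mathrm{y})=\mathrm{r}_{t-1}-\langle\psi_i,\mathrm{r}_{t-1}\rangle\psi_i$, because $P_S(\mathrm{y})\in S\subset S'$ is annihilated and $\mathrm{r}_{t-1}\in S^{\perp}$ only loses its $\psi_i$-component. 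Hence $\Delta=\langle\psi_i,\mathrm{r}_{t-1}\rangle\psi_i$ and $\|\Delta\|_2=\lvert\langle \mathrm{r}_{t-1},\psi_i\rangle\rvert$ since $\psi_i$ is a unit vector. Maximizing this over $i\in\mathcal{K}^c$ is exactly Step 3, closing the argument.

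The main obstacle, and the step I would handle most carefully, is the inductive claim that the recursive one-step orthonormalization against only the single most recent atom is equivalent to full orthonormalization against all previously chosen atoms. The crux is that $\{\psi_1,\ldots,\psi_{t-1}\}$ remains orthonormal, which is what makes the component subtracted at each stage independent of the earlier stages; were this invariant lost, the recursively updated $\psi_i$ would no longer align with $P_{S^{\perp}}(\phi_i)$ and the clean reduction identity would break. I would therefore carry the orthonormality of $\{\psi_1,\ldots,\psi_{t-1}\}$ as an explicit part of the induction hypothesis and verify that Step 2 (with the normalization in Eq.~\ref{eq:onestepGS}) together with the selection in Step 3 preserve it.
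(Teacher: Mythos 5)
Your proof is correct, and its final step is, in effect, the paper's \emph{entire} proof: for a unit-norm atom $\psi$ orthogonal to the previously chosen ones, the reduction produced by Step 7 is $\|\Delta\|_2=|\langle\mathrm{r}_{t-1},\psi\rangle|$, and Step 3 selects the candidate maximizing exactly this quantity. Where you differ is in what you prove \emph{about} the candidates. The paper's two-line argument takes for granted that the recursively one-step-orthonormalized vectors really are orthonormal to the span $S$ of all previously chosen atoms, and it only establishes that eOMP maximizes its own internal update magnitude $|\mathrm{z}_t|$ over those internal vectors --- a statement that follows almost immediately from the update formula. Your proposal supplies the two substantive facts that make the lemma meaningful: (i) the induction, carrying the orthonormality of $\{\psi_1,\ldots,\psi_{t-1}\}$ as an explicit invariant, showing that orthogonalizing against only the most recently chosen atom reproduces full Gram--Schmidt, so each remaining $\psi_i$ is the unit vector along $P_{S^{\perp}}(\phi_i)$, the component of the original atom $\phi_i$ orthogonal to $S$; and (ii) the identification $\mathrm{r}_{t-1}=P_{S^{\perp}}(\mathrm{y})$, which shows that $|\langle\mathrm{r}_{t-1},\psi_i\rangle|$ equals the true drop in residual norm obtained by adding the original atom $\phi_i$ with a full least-squares refit. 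Consequently you prove optimality of the selection over \emph{all} possible atom additions, not merely over the algorithm's transformed vectors, and your side remark that original OMP instead maximizes the shrunken correlation $\|P_{S^{\perp}}(\phi_n)\|_2\,|\langle\psi_n,\mathrm{r}\rangle|$ pinpoints precisely why the two algorithms can choose different atoms. The only caveat is degeneracy: if some remaining $\phi_i$ lies in $S$, then $P_{S^{\perp}}(\phi_i)=0$ and the normalization in Eq.~\ref{eq:onestepGS} is undefined, so your induction should exclude (or the algorithm should prune) such atoms; the paper's own algorithm and proof share this omission, so it is not a gap relative to the paper.
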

 
\begin{proof}
Given an orthonormalized atom $\mathrm{\psi}$, the residual reduction $\Delta$ from using that atom is given by $\alpha\mathrm{\psi}$, where $\alpha = \langle \mathrm{r}_{t-1}, \mathrm{\psi}\rangle$. The residual norm is $\|\Delta\|_2 = |\alpha|$.

In eOMP, Step 3 selects a particular orthonormalized atom to exactly maximize $|\alpha|$, since $\mathrm{\psi}_t = \argmax_{\mathrm{\psi}} \langle \mathrm{r}_{t-1}, \mathrm{\psi}\rangle = \alpha$. Therefore eOMP guarantees to maximize the residual reduction.
\end{proof}
\section{Numerical Experiments}\label{sec:exp}
In this section, we show our numerical experiments in both synthetic and real settings for the proposed eOMP algorithm and compare its performance with original OMP. We first show and compare the exact recovery rate of the signal $\mathrm{x}$ with known sparsity, and then the recovered sparsity when the estimate of the observation $\hat{\mathrm{y}}$ is numerically identical to $\mathrm{y}$, under unknown sparsity conditions.
\subsection{Synthetic simulations}\label{sec:exp_syn}
In the first configuration, we consider the exact recovery rate experiment. Consider an i.i.d. Gaussian ensemble $\mathbf{\Phi} \in \mathbb{R}^{N\times 2N}$, with $N = 128$, we construct a $k$-sparse Gaussian signal $\mathrm{x} \in \mathbb{R}^{N}$ and produce the observations $\mathrm{y} = \mathbf{\Phi}\mathrm{x}$. We use our proposed eOMP algorithm and the original OMP algorithm to run $k$ iterations, and compare the exact recovery rate by measuring whether the recovered signal $\hat{\mathrm{x}}$ and $\mathrm{x}$ are numerically identical. The above simulation is ran for 500 realizations.

Figure~\ref{fig:exactrecov} compares the exact recovery rate between eOMP and OMP.  Proposed eOMP algorithm still outperforms the original OMP and has a slower dropping rate beyond the critical sparsity point (i.e. the point where algorithm starts to fail exactly recovering the signals).
\begin{figure}
\centering
\includegraphics[width=0.85\linewidth]{./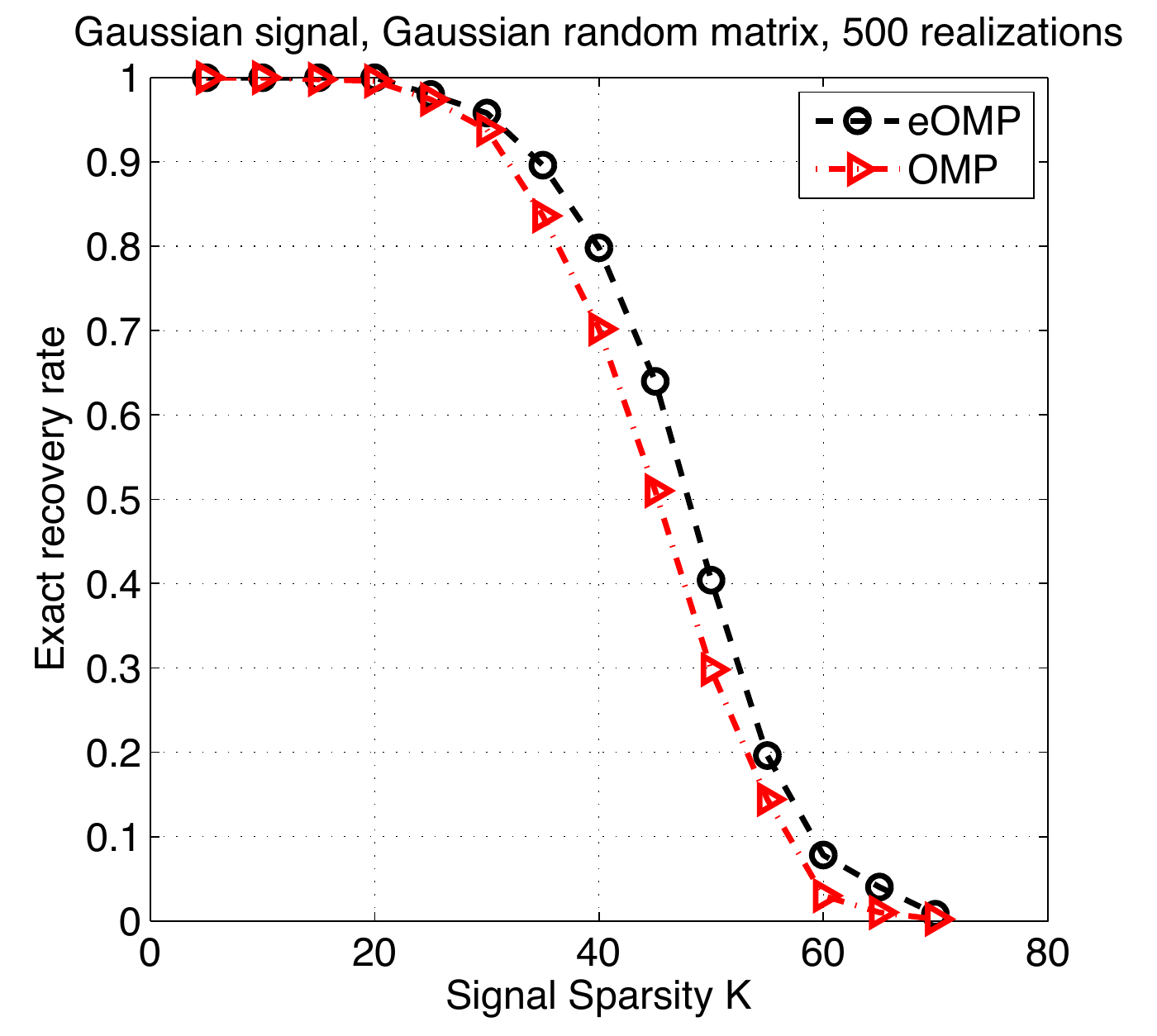}
\caption{Exact recovery rate for Gaussian sampling matrix}
\label{fig:exactrecov}
\end{figure}

In the second configuration of synthetic signals, we consider the unknown sparsity case. To evaluate, given a sampling matrix $\mathbf{\Phi}$ and $k$, we run both the eOMP and the original OMP until the residual norm $\|\mathrm{r}\|_2$ reaches machine precision ($\epsilon < 10^{-5}$). We then measure the L0-norm of recovered signals $\hat{\mathrm{x}}$, which we term as the recovered sparsity. We consider two types of sampling matrix, the i.i.d. Gaussian ensemble of $\mathbb{R}^{N\times 2N}$ and the overcomplete DCT (ODCT) with $2\times$, $4\times$, and $8\times$ number of signal dimension. Note that the ODCT atoms have higher mutual coherence than the i.i.d. Gaussian ensemble. The above simulation is also ran for 500 realizations for each $k$.

Figure~\ref{fig:sparserecov_gauss} shows the recovered sparsity by eOMP and OMP for i.i.d. Gaussian ensembles. Our simulations show that eOMP results in lower recovered K than OMP over the entire range of true signal sparsity K examined (1 to 80). However, to show the difference more clearly, we only show the $k$ range in which the proposed eOMP significantly differs from the original OMP. The result shows when $k\in [40,70]$, the proposed eOMP can achieve up to more than $20\%$ L0-norm reduction than OMP. 
\begin{figure}
\centering
\includegraphics[width=0.85\linewidth]{./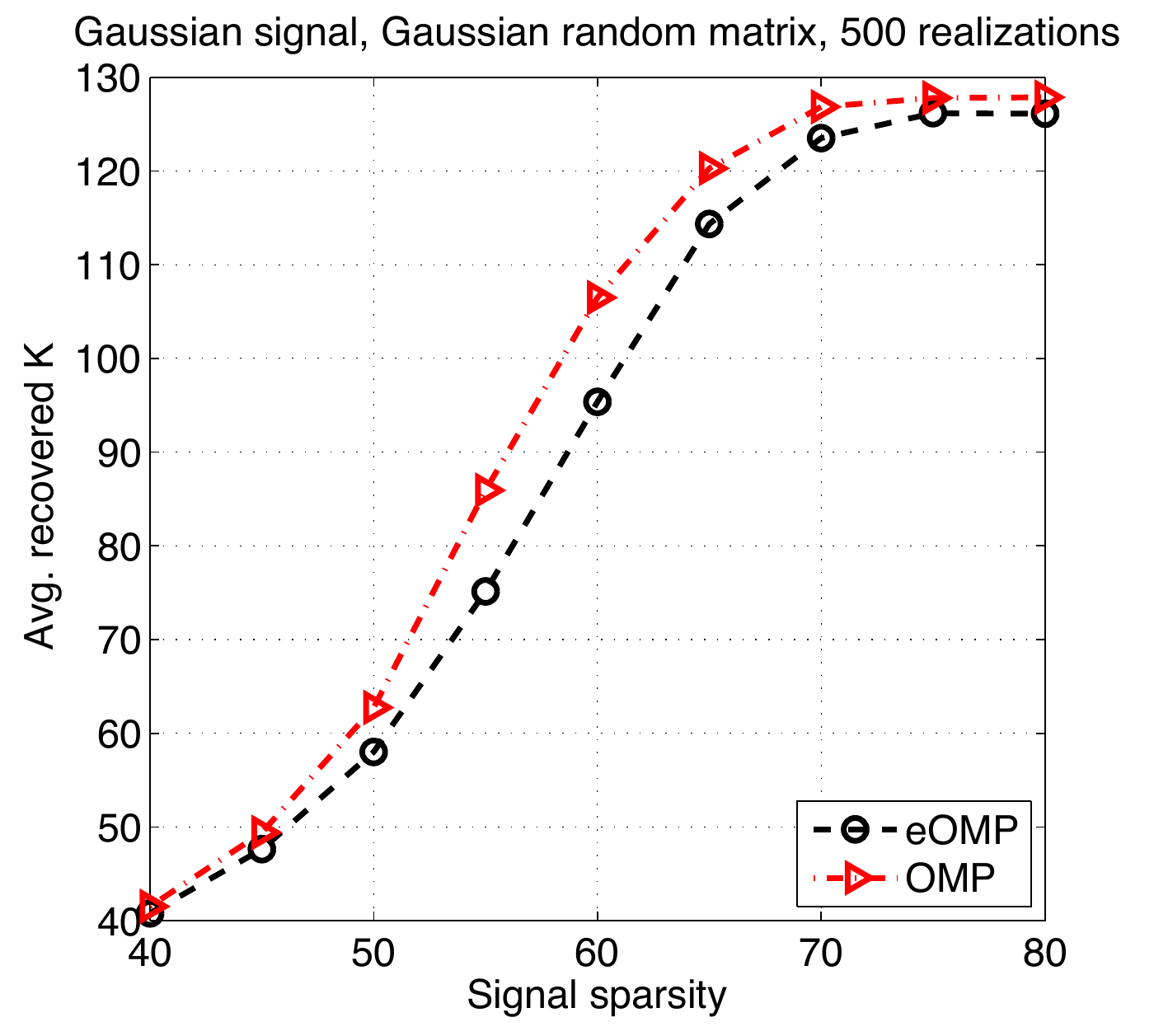}
\caption{Recovered sparsity for Gaussian sampling matrix}
\label{fig:sparserecov_gauss}
\end{figure}

Figure~\ref{fig:sparserecov_odct} shows the recovered sparsity of eOMP and OMP for ODCT dictionaries. In all 3 ODCT configurations, we consistently see a large sparsity saving comparing to OMP. The sparsity saving in ODCT cases is larger than the Gaussian ensemble case, which implies our proposed eOMP is more efficient for finding sparse solution when the sampling matrix $\mathbf{\Phi}$ is more mutually coherent. We note that it is well-knwon that practical audio and image signals can be represented efficiently using DCT and ODCT. Therefore, this is a very encouraging result. 
\begin{figure}
\centering
\includegraphics[width=0.85\linewidth]{./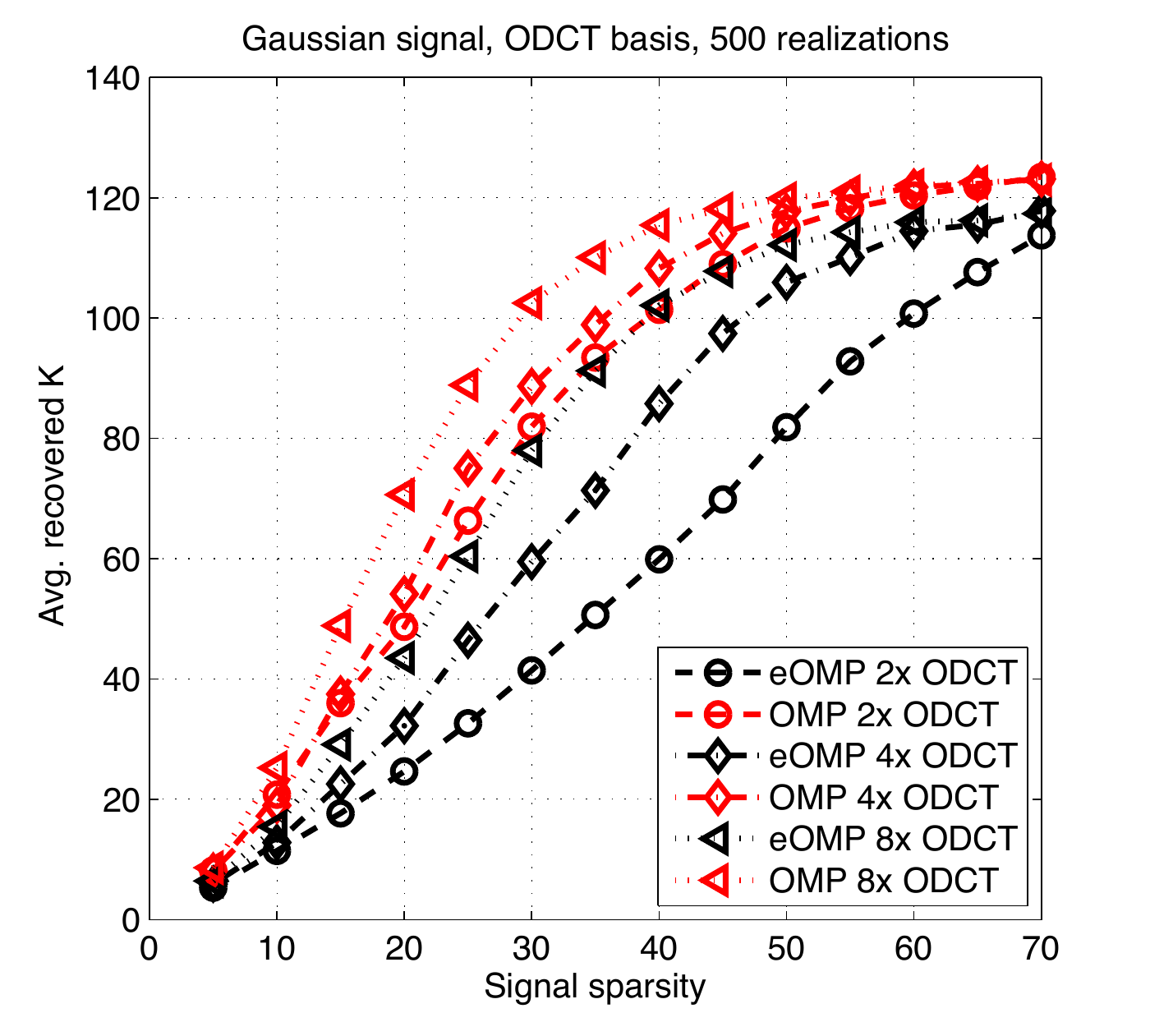}
\caption{Recovered sparsity for overcomplete DCT basis}
\label{fig:sparserecov_odct}
\end{figure}
\subsection{Real simulations}\label{sec:exp_real}
In this section, we show a simulation by adopting our proposed eOMP on a realistic application. We examine the gain of the proposed eOMP over the original OMP in the video sparse representation proposed in~\cite{xue2014video}. The essence of~\cite{xue2014video} is to find a sparse representation of a video frame block, by using a self-adaptive redundant dictionary consisting of all shifted blocks in the previous frame over a large search range. The motivation for using such a dictionary to represent a block is that the current block is likely to be very close to a few atoms in the dictionary.  In the special case when the current block is exactly equal to a shifted block in the previous frame due to motion, the sparsity is 1. In the  example examined here, the block size is $16\times16$, and the search range of $(-23\sim 24)$ shift in both horizontal and vertical direction is used, leading to a dictionary of size $N = 16\times 16, M = 2304$. Furthermore, we do not know the true sparsity of each block.

We show the result as PSNR vs. K plots, where the vertical is the recovery fidelity measured by PSNR, and the horizontal is the average L0-norm over all the blocks in a frame. To generate a curve, we vary the recovery fidelity threshold $\epsilon$ to approximate different expected distortions of applying uniform quantization on the coefficients with respect to the orthonormalized atoms. Assuming the bits spent on each non-zero entry is roughly the same, the higher curve would indicate better quality for the same bit rate. Figures~\ref{fig:stockholm} and~\ref{fig:football} show PSNR vs. K for two video sequences called \verb+Stockholm+ and \verb+Football+, respectively. By using the proposed algorithm, the recovered K for the same reconstruction fidelity (i.e. same PSNR) reduces by up to $30\%$, which translates to substantially reduced bit rate. This is impressive improvement as this simulation was ran on real signals (images) with a real dictionary that is highly redundant and correlated.
\begin{figure}
\centering
\includegraphics[width=0.8\linewidth]{./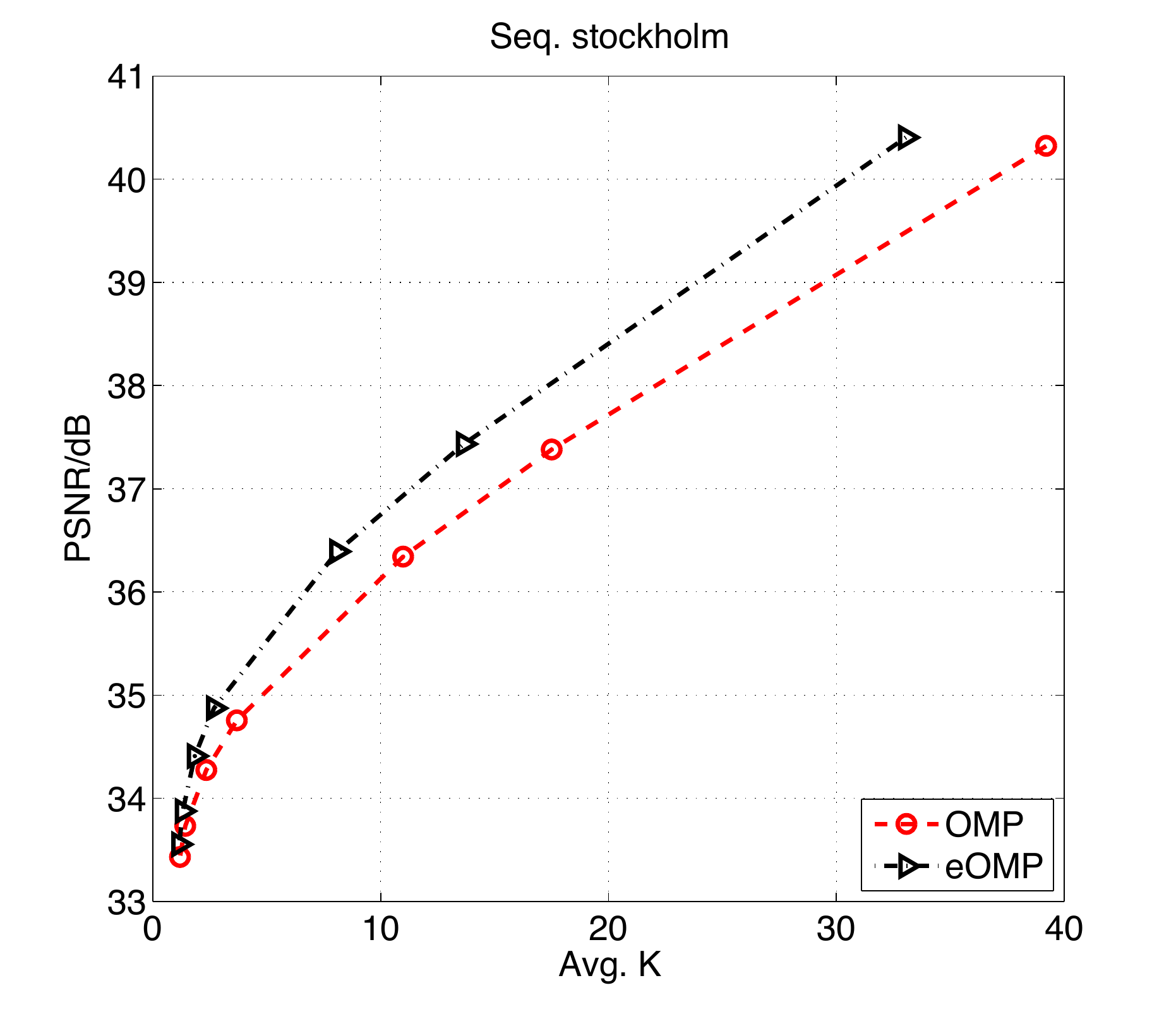}
\caption{PSNR vs. K for Seq. Stockholm}
\label{fig:stockholm}
\end{figure}
\begin{figure}
\centering
\includegraphics[width=0.8\linewidth]{./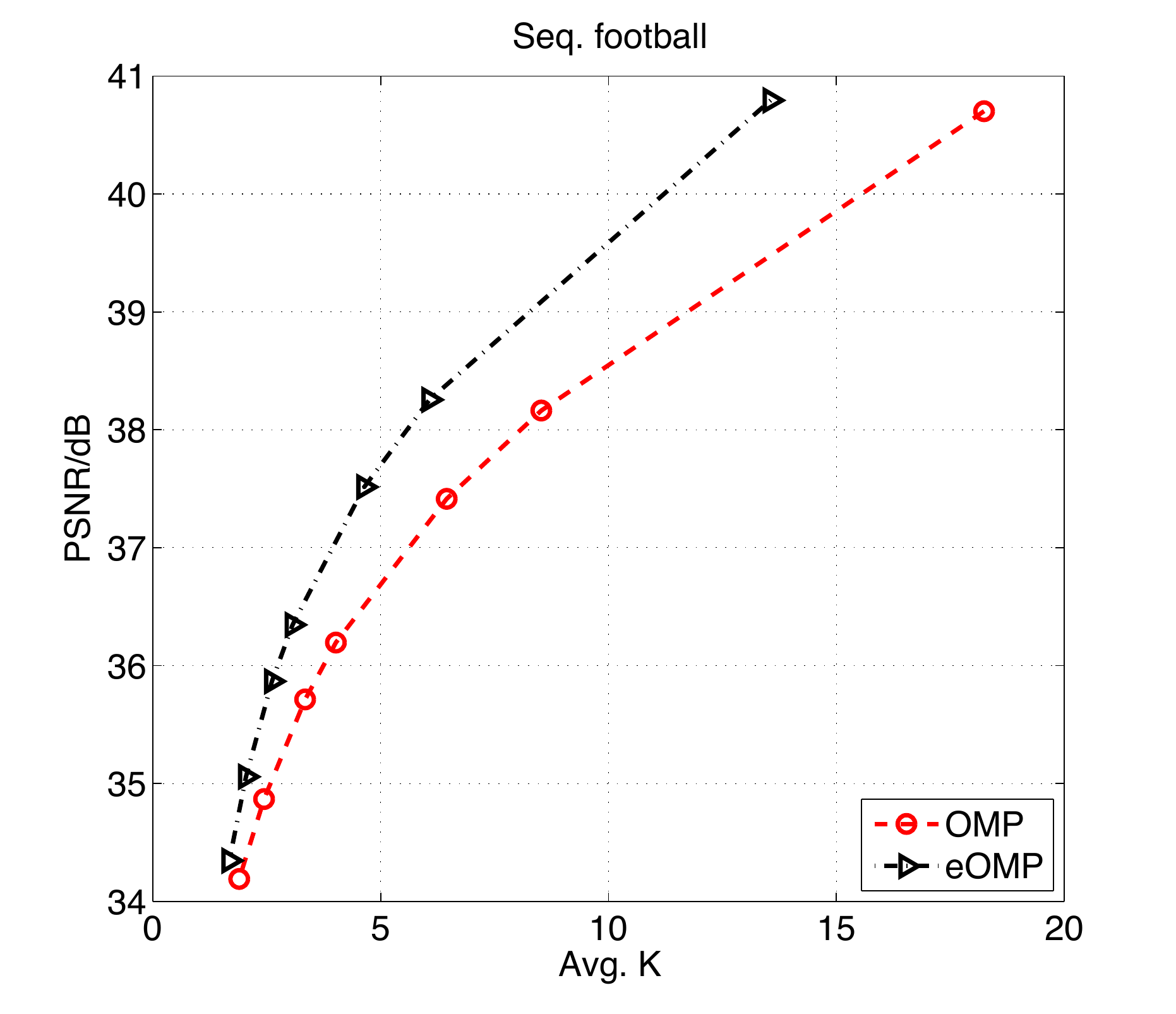}
\caption{PSNR vs. K for Seq. Football}
\label{fig:football}
\end{figure}
\section{Complexity Analysis}\label{sec:comp}
The major difference between the original OMP (implemented using Algorithm~\ref{alg:embedOMP}) and eOMP algorithms is the need of recursively performing one-step orthonormalization for all the remaining atoms. Although it sounds tedious, the complexity analysis we will show here implies manageable complexity increase. First, without loss of generality, assume the majority of the complexity coming from multiplications. Therefore an element-wise multiplication of a vector in $\mathbb{R}^{N}$ is of complexity $\mathcal{O}(N)$. The one-step orthonormalization shown in Eq.~\ref{eq:onestepGS} includes two inner products and two scaling. To perform a one-step orthonormalization, the total complexity is $\mathcal{O}(4N)$. Note that this has to be done for each remaining atom.

The rest of the ingredients are shared among two algorithms. The correlation calculation between the residual and each dictionary atom takes a complexity of $\mathcal{O}(N)$; the residual/solution update takes a complexity of $\mathcal{O}(2N)$ in each iteration. For OMP using Algorithm~\ref{alg:embedOMP}, orthonormalization one atom with respect to $k$ previous chosen atoms has a complexity of $\mathcal{O}\left((2k+2)N\right)$. Assuming no other numerical trick is used, this complexity is similar to that of solving a least square update by QR decomposition.

Now, consider the case that we run both algorithms on a sampling matrix of $\mathbb{R}^{N\times K}$, with signal in $\mathbb{R}^{N}$ for $s$ iterations. For original OMP algorithm in Algorithm~\ref{alg:embedOMP}, note that the set of chosen atoms is always increasing, so the orthonormalization process takes a complexity of $\mathcal{O}\left(\left(1+\ldots+s-1\right)2N+2sN\right)$ for $s$ iterations. Therefore the total complexity is:
\begin{align}\label{eq:ompcomp}
\mathcal{O}&\left(\left(K+K-1+\ldots+K-s+1\right)N\right)+ \nonumber \\
\mathcal{O}&\left(\left(1+\ldots+s-1\right)2N+2sN\right)+\mathcal{O}\left(2sN\right) \nonumber \\
& = \mathcal{O}\left(\left(\frac{(2K-s+7)s}{2}+s(s-1)\right)N\right)
\end{align}
For the eOMP, one-step orthonormalization is applied to all remaining atoms, which is decreasing in number, so the second term is replaced by $\mathcal{O}\left(\left(K-1+\ldots+K-s+1\right)4N\right)$ for $s$ iterations. Therefore the total complexity is:
\begin{align}\label{eq:revompcomp}
\mathcal{O}&\left(\left(K+K-1+\ldots+K-s+1\right)N\right)+ \nonumber \\
\mathcal{O}&\left(\left(K-1+\ldots+K-s+1\right)4N\right)+ \mathcal{O}\left(2sN\right) \nonumber \\
& = \mathcal{O}\left(\left(\frac{(2K-s+5)s}{2}+2(2K-s)(s-1)\right)N\right)
\end{align}
Assuming $K\gg s$, Eq.~\ref{eq:ompcomp} and Eq.~\ref{eq:revompcomp} are approximately $\mathcal{O}(NKs)$ and $\mathcal{O}(5NKs)$, respectively. The ratio of two complexity terms is constantly around 5. To validate this, we measure the total running time for the $2\times$ overcomplete DCT simulation shown in Sec.~\ref{sec:exp_syn} for both the original OMP and the eOMP. For the original OMP, we use the implementation provided in SparseLab~\cite{donoho23sparselab} software, which solves the least squares update using QR decomposition.
\begin{table}
\begin{adjustwidth}{-.2in}{}
\centering
\caption{Total running time (in seconds)}
\label{tab:runtime}
\vspace{-.2in}
\footnotesize{
\begin{tabular}{|c|c|c|c|c|c|c|}
\hline
Signal sparsity & 10 & 20 & 30 & 40 & 50 & 60 \\ \hline
OMP in SparseLab~\cite{donoho23sparselab} & 0.8614 & 2.393 & 4.076 & 5.506 & 6.418 & 6.678 \\ \hline
OMP in Algorithm~\ref{alg:embedOMP} & 0.6078 & 1.709 & 2.889 & 3.866 & 4.542 & 4.666 \\ \hline
Proposed eOMP & 5.119 & 11.30 & 17.45 & 25.64 & 33.71 & 39.79 \\ \hline
\end{tabular}
}
\vspace{-.15in}
\end{adjustwidth}
\end{table}
Table~\ref{tab:runtime} lists the total running time for all three algorithms, implemented in Matlab, for 500 trials of each given K. We see the ratios are around 5$\times$. The slightly larger ratio may be due to the inefficient implementation of our eOMP algorithm.
\section{Conclusion}\label{sec:conc}
In this work, we propose an improved OMP algorithm eOMP, by recursively orthonormalizing the remaining atoms. We have proved the eOMP algorithm maximizes the residual reduction at each iteration. Through extensive numerical simulations with both synthetic and real data, we conclude the proposed eOMP algorithm can consistently achieve better recovered sparsity and higher exact recovery rate than the original OMP. The gain is more significant when the signal's actual sparsity is unknown and the sampling matrix
(dictionary) is highly correlated. Our complexity analysis shows that the proposed eOMP algorithm has the same order of magnitude of complexity as the original OMP, roughly equals 5 times more complexity than that of the original OMP when the sparsity is significantly smaller than the dictionary size.

The work reported in this paper is by no means complete: we have yet to provide a mathematical proof on the exact recovery condition, and we are looking to possible further ways to reducing the running time. There are several recent theoretical developments on the greedy L0-norm algorithms but as far as we known, none of them are designed with a clear application perspective. For example, the algorithm needs to know the actual signal sparsity, which is typically unknown in practical application. Our goal is to pursue a better, more application friendly L0-norm minimization algorithm.

%



\section*{Acknowledgment}
The authors would like to thank Mr. Yi Zhou for providing simulation results shown in Fig.~\ref{fig:stockholm}-\ref{fig:football}.

\ifCLASSOPTIONcaptionsoff
  \newpage
\fi



\bibliographystyle{IEEEtran}
\bibliography{revOMP}




\end{document}